\def\fac#1#2{{#1/\lower0.5ex\hbox{\small$#2$}}}
\title{Complex slices on a real variety}
\author{Oleg Viro}
\date{}
\begin{document} 

\maketitle

\section{Transverse or not transverse}\label{s1}
\subsection{Transversality.}\label{s1.1}
Remind that subspaces $V$ and $W$ of a vector space $U$ are said to be {\em
transverse\/} if $V\cup W$ generates $U$, i.e., $U=V+W$. 
In a finite-dimensional $U$,
transversality is equivalent to a single condition in terms of dimensions:
$$V,W \text{ are transverse in }U \ \iff \ \dim U-(\dim V+\dim W)+\dim (V\cap W)=0$$

Two submanifolds of a smooth manifold are said to be {\em transverse\/} if
at each of their intersection points their tangent spaces are transverse.

Transversality of submanifolds is a condition of general position: 
any two submanifolds can be made transverse by a suffitiently small
perturbation of any of them and if submanifold are transverse then a
sufficiently small perturbation cannot destroy transversality. 

However, non-transverse intersections in Real Algebraic Geometry emerge 
inevitably if one of the submanifolds is a complex variety and the other
is the real point set of the ambient real algebraic variety. 

\subsection{Example: a plane curve.}\label{s1.2}  In $U=\C P^2$, the 
{\em complex\/} 
points set $V=B_\C$ of a {\em real\/} algebraic curve $B$ is {\em not 
transverse} 
to $W=\R P^2$. The intersection $V\cap W=V\cap\R P^2$ is the set $B_\R$
of real points  of $B$.

\centerline{$\vcenter{\hbox{\begin{picture}(0,0)%
\includegraphics{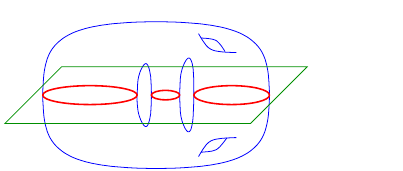}%
\end{picture}%
\setlength{\unitlength}{3315sp}%
\begin{picture}(3827,1622)(-405,-782)
\put(2071,-691){\makebox(0,0)[lb]{\smash{\fontsize{10}{12}\usefont{T1}{ptm}{m}{n}{\color[rgb]{0,0,1}$V=B_{\C}$}%
}}}
\put(2341,-106){\makebox(0,0)[lb]{\smash{\fontsize{10}{12}\usefont{T1}{ptm}{m}{n}{\color[rgb]{1,0,0}$V\cap W=B_\R$}%
}}}
\put(2566,209){\makebox(0,0)[lb]{\smash{\fontsize{10}{12}\usefont{T1}{ptm}{m}{n}{\color[rgb]{0,.56,0}$W=\R P^2$}%
}}}
\end{picture}%
}}$}\medskip

On the picture, $V$ and $W$ look transversal. However they lie not in
$\R^3$,
but in the 4-dimensional $\C P^2$, where intersection of transverse 
surfaces must be 0-dimensional.

The intersection of $V$ and $W$ can be 
made transversal  by an 
arbitrarily small
perturbation of $V$ 
under which $V$ stay in the class of 
{\bf complex} algebraic curves,
but  may not be a {\bf complexification of real} algebraic
curve.

\subsection{Real algebraic varieties in a nutshell.}\label{s1.3}
A real algebraic variety $X$ has\vspace{-9pt}
\begin{itemize}\setlength\itemsep{-4pt}
\setlength\topsep{0pt}
\setlength\parsep{0pt}
\item the set $\cx$ of complex points,
\item anti-holomorphic involution $\conj:\cx\to\cx$, and 
\item the set $\rx=\fix(\conj)$ of real points.  
\end{itemize}\vspace{-7pt}
If $X$ is nonsingular, then \vspace{-7pt}
\begin{itemize}\setlength\itemsep{-4pt}
\setlength\topsep{0pt}
\setlength\parsep{0pt}
\item $\cx$ is a smooth manifold with complex
structure, 
\item $\conj$ is diffeomorphism and 
\item $\rx$ is a smooth submanifold of
$\cx$.
\end{itemize}\vspace{-12pt}

\subsection{A complexification a real variety is never transverse to real
part of the ambient space.}\label{s1.4}
\begin{Th}\label{Th1}
If $Y$ is a real nonsingular subvariety of a real nonsingular variety  $X$, 
then both $\cy$ and $\rx$ are smooth submanifolds of $\cx$ and 
$\ry=\cy\cap\rx$. Then  $\cy$ and $\rx$ are not 
transverse in $\cx$ unless $\ry\cap\rx=\empt$ or $\dim Y=\dim X$.
\end{Th} 

\begin{proof} Assume that $\cy$ and $\rx$ are transverse. Then by
the transversality condition $$\dim_\R\cx-(\dim_\R\rx+\dim_\R\cy)+\dim_\R(\cy\cap\rx)=0.$$ 
Since \vspace{-7pt}
\begin{itemize}\setlength\itemsep{-4pt}
\setlength\topsep{-4pt}
\setlength\parsep{-4pt}
\item $\dim_\R\cx=2\dim_\R\rx$, 
\item $\dim_\R\cy=2\dim_\R\ry$ and 
\item $\cy\cap\rx=\ry$,
\end{itemize}\vspace{-7pt}
it follows\vspace{-7pt}
\begin{multline}
2\dim_\R\rx-(\dim_\R\rx+\dim_\R\cy)+\dim_\R\ry\\=\dim_\R\rx-\dim_\R\ry=0. 
\end{multline}
\end{proof}

In this paper we study how a {\em complex} subvariety $V$ 
of $\cx$ can meet $\rx$. 

\subsection{A complex cut of a real algebraic variety is real
algebraic.}\label{s1.5}
\begin{Th}\label{Th2} 
For any complex subvariety $V\subset\cx$, the intersection $V\cap\rx$ is the 
set  of real points of a real algebraic subvariety of $X$.
\end{Th}
\begin{proof}
 The complex conjugation involution
$\conj:\cx\to\cx$ maps $V$ to a complex subvariety. The
intersection $V\cap \conj V$ is a complex variety. Denote it by $Y$.
This is a real algebraic variety, because it is invariant under $\conj$.
Indeed, $\conj Y=\conj(V\cap\conj V)=\conj V\cap \conj^2V=(\conj V)\cap V= Y$. 

Further, $\ry=Y\cap\rx=(V\cap\conj V)\cap\rx=(V\cap\rx)\cap(\conj
V\cap\rx)=V\cap\rx$.
Thus, $V\cap\rx=Y\cap\rx=\ry$.
\end{proof}

\begin{cor} For any complex subvariety $V\subset \cx$ transversal to $\rx$, 
there exists a real algebraic subvariety $Y$ of $X$ such that $\cy\subset V$
and $\ry=V\cap\cx$. 
\end{cor}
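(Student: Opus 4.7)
The plan is to apply Theorem~\ref{Th2} directly to $V$ and identify the $Y$ it produces as the one we want. Given $V$, I would take $Y := V \cap \conj V$ in the sense of the proof of Theorem~\ref{Th2}, that is, the real algebraic subvariety of $X$ whose complex point set is the $\conj$-invariant set $V\cap\conj V$.

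Both required conclusions are then immediate. The inclusion $\cy \subset V$ is tautological, since by construction $\cy = V\cap\conj V\subset V$. The equality $\ry = V\cap\rx$ (which I read, since the displayed ``$\cx$'' cannot sensibly equal $\ry$, as a typo for $\rx$) is precisely the final identity in the proof of Theorem~\ref{Th2} and can be quoted verbatim.

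The transversality hypothesis plays no role in either set-theoretic claim; its function is to guarantee that the $Y$ produced by the construction has the expected size, so that it is a genuine complexification of $\ry$ rather than an inflated real subvariety of $X$. I would add a short remark to that effect: the transversality identity from \S\ref{s1.1} applied to $(V,\rx)$ gives $\dim_\R(V\cap\rx) = 2\dim_\C V - \dim_\C\cx$, and the same number arises as $\dim_\C(V\cap\conj V)$ once one knows that $V$ and $\conj V$ are themselves transverse in $\cx$.

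This last transversality is the only nontrivial step I anticipate, and it is the place where the hypothesis is actually used. I would handle it by a tangent-space observation at a real point $p\in V\cap\rx$: the subspace $T_pV\cap T_p\conj V$ is a complex subspace of $T_p\cx$ that is invariant under complex conjugation, and hence equals the complexification of its real part $T_pV\cap T_p\rx$, whose real dimension $2\dim_\C V - \dim_\C\cx$ is pinned down by the transversality of $V$ with $\rx$. Once this dimension count is in hand, the corollary is a clean repackaging of Theorem~\ref{Th2}.
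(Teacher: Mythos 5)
Your proposal is correct and follows exactly the route the paper intends: the corollary is an immediate repackaging of Theorem~\ref{Th2}, with $Y$ the real variety whose complex point set is $V\cap\conj V$, so that $\cy\subset V$ is tautological and $\ry=V\cap\rx$ is the identity already established in that proof (and you are right that the displayed ``$V\cap\cx$'' is a typo for $V\cap\rx$). Your supplementary dimension count at real points is a sound bonus but not needed for the statement as written.
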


\subsection{Complex transverse slices.}\label{s1.6}
Let $X$ be a nonsingular real algebraic variety.
Usually, $\cx$ contains lots of complex subvarieties.
They cut real algebraic varieties on $\rx$.
Most of the complex varieties are transverse to $\rx$.

Thus we have a large class of real algebraic varieties, which are 
transverse intersections of complex varieties with $\rx$.

Let $Y$ be a real algebraic subvariety of  a nonsingular real 
algebraic variety $X$, let $V$ be  a complex subvariety 
of $\cx$ transverse to $\rx$. If $\ry=V\cap\rx$,
then $Y$ is called a {\em slice} of $X$. 
We say also that $Y$ is {\em sliced\/} in $X$ by $V$. 

Sliced subvarieties are quite special. In particular:\vspace{-7pt}
\begin{itemize}\setlength\itemsep{-4pt}
\setlength\topsep{0pt}
\setlength\parsep{0pt}
\item  A sliced subvariety has {\em even codimension}. 
\item The {\em transverse\/} (aka normal) {\em bundle\/} of a real sliced
subvariety admits a {\em complex\/} structure.
\item In particular, the transverse bundle is {\em orientable}.
\item A choice of the slicing complex subvariety determines a {\em complex
structure\/} and {\em orientation\/} of the transverse bundle. 
\item With this orientation, a real {\em sliced\/} 
subvariety realizes in the ambient real variety an {\em
integer cohomology class\/} of the complementary dimension.
\item The orientation and the cohomology class are involved in topological
restrictions on mutual position of a slice with  other real algebraic
variaties. 
\item An irreducible {\em sliced\/} subvariety of codimension two is a {\em base of a real 
pencil\/} of hypersurfaces.
\item Conversely, a non-singular base of a real algebraic pencil of 
projective hypersurfaces is a slice.
\end{itemize}
To the best of my knowledge, real sliced subvarieties have not been 
discussed 
in literature. Below we consider their basic properties and study some 
topological theorems in which they are involved. 

Our considerations are 
restricted to non-singular varieties, although they admit generalizations
to singular varieties. 

\section{Codimension two slices and pencils of
hypersurfaces.}\label{s2}
The simplest class of sliced varieties are bases of generic real pencils of
projective hypersurfaces. 

\begin{Th}\label{ThPen}
A real algebraic projective variety of codimension 2 is a slice if and only
if it is a base of a generic real pencil of projective hypersurfaces (and,
in particular, it is a regular complete intersection of two hypersurfaces
of the same degree). 
\end{Th}

\subsection{Digression on projective hypersurfaces and pencils.}\label{s2.1}
Recall that a hypersurface in an $n$-dimensional projective space 
over a field $\mathbb F$ is defined by a single equation $P(x_0,\dots,x_n)=0$ 
where $P$ is nonzero homogeneous polynomial in $n+1$ variables with
coefficients in $\mathbb F$. Thus the hypersurfaces are parametrized by 
the polynomials considered up to a nonzero factor. In other words, the
hypersurfaces form an $n$-dimensional projective space over $\mathbb F$.

A projective line in this space is called a {\em pencil\/} of hypersurfaces
of degree $d$ in $\mathbb F P^n$. As a projective line, a pencil is defined by
any two of its points, that is by any two projective hypersurfaces over
$\mathbb F$ 
of the same degree.

Let $Y$ and $Z$ be {\em real\/} algebraic hypersurfaces of degree $d$ in the 
projective space of dimension $n$. 
As projective hypersurfaces, $Y$ and $Z$ are defined by a single polynomial 
equation each.  Let $Y$ be defined by an equation $R=0$ and $Z$
be defined by an equation $S=0$, where $R$ and $S$ are real homogeneous 
polynomials of degree $d$. The pencil containing $Y$ and $Z$ consists of
hypersurfaces defined by equations 
$$\Gl R(x_0,\dots,x_n)+\mu S(x_0,\dots,x_n)=0$$ with $(\Gl:\mu)\in \R P^1$.

For a generic pencil, the hypersurfaces can be chosen to be nonsingular and
transverse. Their intersection is nonsingular subvariety of a codimension
two in the projective space. The intersection is called the {\em base\/} of the pencil.  
The pencil consists of all irreducible hypersurfaces containing the base.

\subsection{Proof of Theorem \ref{ThPen}.  Sufficiency.}\label{s2.2}
Let $X$ be a real projective variety of codimension two which is a base of 
a generic real pencil of projective hypersurfaces of degree $d$. 
 Let us choose two generic hypersurfaces $Y$ and $Z$ belonging to the
pencil. Thus $Y$ and $Z$ are nonsingular real algebraic 
hypersurfaces of degree $d$ in the $n$-dimensional projective space. Let 
$Y_\R$ and $Z_\R$ intersect transversally. Then $X=Y\cap Z$ is a variety 
of codimension two.  Notice that $X$ is nonsingular. 

As projective hypersurfaces, $Y$ and $Z$ are defined by a single real 
polynomial 
equation each.  Let $Y$ be defined by an equation $R=0$ and $Z$
be defined by an equation $S=0$, where $R$ and $S$ are real homogeneous 
polynomials of degree $d$. 

Then $X=Y\cap Z$ is defined by the system 
$$\begin{cases}R(x_0,\dots,x_n)=0\\S(x_0,\dots,x_n)=0\end{cases}$$
and differential real 1-forms $d_p R$, $d_p S$ are linear independent 
at each point $p=(x_0,x_1,\dots,x_n)\in Y_\R\cap Z_\R$. 


Consider the complex projective hypersurface $V$ defined by the equation
$$R(x_0,\dots,x_n)+iS(x_0,\dots,x_n)=0.$$ 
It belongs to the pencil at $(\Gl,\mu)=(1,i)$. At each point 
$$p=(x_0,x_1,\dots,x_n)\in V\cap \rpn=\{x\in\rpn\mid R(x)=0, S(x)=0\}=
Y_\R\cap Z_\R$$ 
the complex 1-form 
$d_p(R+iS)=d_pR+id_pS$ does not vanish and annihilates $T_pV$. 
Therefore $V$ has complex codimension 1 and real codimension 2
in $\cpn$ at each $p\in\rx$. On the other hand, $\rx$ has real codimension
2 in $\rpn$. Hence $\dim_\R\cpn-\dim_\R V=\dim_\R\rpn-\dim_\R\rx$. This
equality implies transversality of $V$ and $\rpn$. \qed

\subsection{Proof of Theorem \ref{ThPen}. Necessity.}\label{s2.3} Let $X$ be a sliced real algebraic variety of 
codimension 2 in $\rpn$  
and $V\subset\cpn$ be a non-singular complex variety transverse to $\rpn$ 
such that $\rx=V\cap\rpn$. 

As a complex projective hypersurface, $V$ is defined by a single equation  
$P(x_0,\dots,x_n)=0$, where $P$ is a complex homogeneous polynomial. 
Since $\rx=V\cap\rpn$, we have $\rx=\{x\in\rpn\mid P(x)=0\}$.

By Theorem \ref{Th2} above, the polynomial $P$ {\em cannot be real\/}, because
the set $V$ of its zeros intersects $\rpn$ transversally. 
Thus $P=\Re P + i\Im P$, and both $\Re P$ and $\Im P$
are non-zero real homogeneous polynomials in $n+1$ variables. 
Put $\Re P=R$, $\Im P=S$ and denote  by $Y$ and $Z$ the hypersurfaces 
of $\rpn$ defined by equations 
$R=0$ and $S=0$, respectively. Obviously, for any $x\in\R^{n+1}$
$$P(x)=R(x)+iS(x)=0 
\iff R(x)=0 \text{ and } S(x)=0.$$ Therefore
\begin{multline*}
X_\R=\{x\in\rpn\mid P(x)=0\}\\=\{x\in\rpn\mid
R(x)=0\}\cap\{x\in\rpn\mid S(x)=0\}\\=Y_\R\cap Z_\R.
\end{multline*}
The tangent space of $V$ at  $p\in\rx$ is the annihilator of the complex
1-form $d_pP=d_pR+id_pS$. Since $V$ is a nonsingular complex hypersurface, 
its tangent space $T_pV$ at $p\in\rx$ has complex codimension one. 

Since $V$ is transverse to $\rpn$ at $p$,
$$ \dim_\R T_pV+\dim_\R \rpn-\dim_\R(T_pV\cap T_p\rpn)=\dim_\R\rpn=2n.$$
From this, by substituting expressions for the known dimensions, we get 
$$(2n-2)+n-\dim_\R(T_pV\cap T_p\rpn)=2n$$ and  
$\dim_\R(T_pV\cap T_p\rpn)=n-2$. 

On the other hand, $T_pV$ is the annihilator of the complex
1-form $d_pP$ which is equal to $d_pR+id_pS$. Therefore the real 
vector space $T_pV\cap T_p\rpn$ (which as we have seen has real codimension
2) has annihilator generated by real 1-forms $d_pR$ and $d_pS$. Therefore 
$d_pR$ and $d_pS$ are linear independent. Hence $Y_\R$ and $Z_\R$ are
nonsingular and transverse to each other at $p$. 

Thus $Y$ and $Z$ are real hypersurfaces nonsingular and transverse to each
other at each real intersection point and  $X=V\cap\rpn$ is a base of
pencil generated by them. \qed          

\subsection{Real hypersurfaces with real part having codimension two.}\label{s2.4}
It's worth mentioning that for a slice real projective variety $X$ 
of degree $d$ and codimension 2, there is a {\em real singular\/} hypersurface  
$Y$ of degree $2d$ such that $\ry=\rx$.  One can define such a 
hypersurface by equation $R^2+S^2=0$, where $R=0$ and $S=0$ are
equations of any two real hypersurfaces from the pencil with base $X$.
The hypersurface defined by equation $R^2+S^2=0$ is the union of two 
conjugate imaginary hypersurfaces from the same pencil: 
$R\pm iS=0$.

\section{Digression on orientations}\label{s3}
\subsection{Orientations of a real vector space.}\label{s3.1} 
Recall that an orientation of a finite-dimensional real vector space $X$ 
is a function $O$ on the 
set of bases of $X$ taking values in $\{\pm 1\}$ which satisfies the
following relation: 
$$O(b)O(b')=\sign\det T,$$ 
where $b=(b_1,\dots,b_n)$ and $b'=(b'_1,\dots,b'_n)$ are
bases of $X$, and $T$ is the transition matrix: $b'=bT$. Due to the
relation, an orientation is determined by its value on any basis and there are exactly two orientations on any $X$. 
Usually an orientation $O$ is identified with the set $O^{-1}(+1)$ 
of bases, on which $O$ takes value $+1$. 

This approach is quite confusing for a zero-dimensional $X$, because in 
this case there exists only one basis of $X$, the one that contains no 
vectors. 
Still, on the 0-dimensional space, there are two orientations: one that takes 
value $+1$ on this empty basis, and another, that takes value $-1$ on it. 
These orientations are identified with $+1$ and $-1$, respectively.    

On the contrary, for a vector space of dimension $>0$, there is no way to 
tell the difference between the two orientations, because there are 
linear automorphisms exchanging the orientations. 

A linear isomorphism $f:X\to Y$ acts contravariantly on orientations: 
an orientation $O$ of $Y$ is sent to the orientation
$f^*(O):(b_1,\dots,b_n)\mapsto O(f(b_1),\dots,f(b_n))$ of $X$. 

A distinguished orientations appear if the space has an extra structure
(for example, if it has a complex structure, see Section \ref{s3.2}), 
or if it is involved into a configuration of spaces in which the other spaces 
are oriented, see Section \ref{s3.4}.   

\subsection{Natural orientations of a complex vector space.}\label{s3.2}
Let $X$ be a vector space over $\C$. It can be considered as a vector space
over $\R$ (by forgetting multiplication of vectors by non-real complex
numbers). For each basis $b=(e_1,\dots,e_n)$ of $X$ over $\C$, one can cook 
up a basis over $\R$. There are many prescriptions to accomplish this. 
The most popular two of them give $b_\R=(e_1,ie_1,\dots,e_n,ie_n)$ and
$b'_\R=(e_1,\dots,e_n,ie_1,\dots,ie_n)$.  

Observe, that for $X=\C^2$ and $b=((1,0),(0,1))$ the bases 
$$b_\R=((1,0),(i,0),(0,1),(0,i)) \text{ \ and \ } b'_\R=((1,0),(0,1),(i,0),(0,i))$$
define different orientations. Nonetheless, application of the same 
prescriptions to all complex bases of the same complex vector space give 
bases  belonging
to the same orientation of the real space. (If the transition matrix
between two bases over $\C$ has determinant $D\in\C$, then the transition 
matrix for the corresponding real bases has determinant $D\overline
D=|D|^2>0$.)

For the purposes of this paper, we have to make a choice.
We prefer the first prescription, and call an orientation of a complex vector
space $X$ the {\em complex orientation\/} if it takes value $+1$ on 
$b_\R$. 

\subsection{Complex orientations and isomorphisms.}\label{s3.3}
Of course, a complex linear isomorphism $X\to Y$ between complex vector 
spaces maps the complex orientation of $Y$ to the complex orientation of
$X$.

What about complex {\em semilinear} isomorphisms?

Recall that a {\em semilinear\/} map $f:X\to Y$ between complex vector 
spaces $X$ and $Y$ is an additive homomorphism (i.e., $f(v+w)=f(v)+f(w)$
for any $v,w\in X$) such that $f(zv)=\overline zf(v)$ for any $v\in X$ and
$z\in\C$.  

\begin{Th}\label{lem1} 
Let $X,Y$ be complex vector spaces. Then a semilinear 
isomorphism $f:X\to Y$ maps the complex orientation of $Y$ to 
$(-1)^{\dim_\C Y}$ times the complex orientation of $X$.
\end{Th}

\begin{proof}
A semilinear isomorphism $f:X\to Y$ maps a basis $e_1,\dots,e_k$ 
of $X$ over $\C$ to a basis $f(e_1),\dots,f(e_k)$ of $Y$ over $\C$, while 
the corresponding basis $e_1,ie_1,\dots,e_k,ie_k$ over $\R$ is mapped to
$f(e_1),-if(e_1),\dots,f(e_k),-if(e_k)$ of $Y$. Let $O_X$ and 
$O_Y$ be the complex orientations of $X$ and $Y$, respectively. 
Then  $O_X(e_1,ie_1,\dots,e_k,ie_k)=1$, while
\begin{multline*}
f^*O_Y(e_1,ie_1,\dots,e_k,ie_k)\\=O_Y(f(e_1),-if(e_1),\dots,f(e_k),-if(e_k))
\\=(-1)^kO_Y(f(e_1),if(e_1),\dots,f(e_k),if(e_k))=(-1)^k
\end{multline*} 
\end{proof}

\subsection{Orientations in a short exact sequence.}\label{s3.4}
In a short exact sequence
\begin{equation}\label{shes} 
0\to A\xrightarrow{\Ga}B\xrightarrow{\Gb} C\to0
\end{equation} 
of vector spaces 
over $\R$, there are a number of ways to determine orientation of any 
of the three spaces from the orientations of the other two.
Unfortunately, there are several ways. For our purposes, we  
choose one. 

The relation among the orientations comes from relation among bases.
Let $a=(a_1,\dots,a_p)$ be a basis of $A$, let $b=(b_1,\dots,b_{p+q})$ be basis of $B$
and $c=(c_1,\dots,c_q)$ be a basis of $C$. These bases are said to be {\em
related \/} by sequence \eqref{shes}, if  $\Gb(b_i)=c_{i}$ for $i=1,\dots,q$
and $\Ga(a_j)=b_{j+q}$ for $j=1,\dots,p$.   
Orientations $O_A$, $O_B$, $O_C$ will be 
called {\em related\/} by sequence \eqref{shes}
if
\begin{multline*}O(a)O(b)O(c)\\=
O_A(a_1,\dots,a_p)O_B(b_1,\dots,b_q,\Ga(a_1),\dots,\Ga(a_p))O_C(\Gb(b_1),\dots,\Gb(b_q))\\=1\end{multline*}
for bases $a$, $b$ and $c$ related by sequence \eqref{shes}.

This rule may seem to be
counter-intuitive: why not to start the basis of $B$ starting with
the images of $a_i$. Our choice is forced, in particular, by the commonly
accepted rule about the orientation of the boundary of an oriented smooth
manifold
: according to this rule the basis of the tangent space $T_pM$ 
formed by an outwards vector followed by positively
oriented basis of the boundary $T_p\p M$ is positively oriented. Here
$A=T_p\p M$, $B=T_p M$ and $C=\fac{B}A$.

\section{The transverse bundle of a slice}\label{s4}

\subsection{Transverse bundle.}\label{s4.1}
If $V$ is a smooth submanifold of a smooth manifold $M$ and $p\in S$, then
a {\em transverse space\/} $N_p^MV$  of $V$ at $p$ is defined as 
the quotient space 
$\fac{T_pM}{T_pV}$. The fiber bundle 
formed by transverse spaces at all $p\in V$ is called the {\em transverse
bundle\/} of $V$ in $M$ and denoted by $N^MV$.
  
If $T_pM$ is equipped with an inner product (say, if $M$ is a Riemannian 
or Hermitian manifold), then there is a canonical subspace of $T_pM$ 
isomorphic to $\fac{T_pM}{T_pV}$ (the orthogonal complement of 
$T_pV$). This subspace is called the {\em normal space\/} at $p$. 
The fiber bundle formed by normal spaces at all $p\in V$ is called {\em normal
bundle\/} of $V$ in $M$. It is isomorphic to the transverse bundle.

If $M$ and $V$ are complex manifolds, then the tangent spaces $T_pM$ and
$T_pV$ are complex vector spaces. Hence, $N_p^MV=\fac{T_pM}{T_pV}$ is a
complex vector space, too.

\subsection{Complex structure on the transverse bundle of a real slice.}\label{s4.2}
\begin{Th}\label{Th1} 
 Let  $X$ be a non-singular real algebraic variety  
and $Y$ be a slice subvariety of $X$. Then the transverse bundle 
$N^{\rx}\ry$ of $\ry$ in $\rx$
admits a complex structure. The complex
structure is defined by a choice complex subvariety $V$ of $\cx$, 
wnich carves $\ry$ on $\rx$ (that is $\ry=V\cap\rx$).
\end{Th}

\begin{proof} Let us fix a point $p\in\ry$. 
Since $\ry$ is a transverse intersection of $\rx$ and $V$, 
$$T_p{\ry}=T_p{\rx}\cap T_pV.$$
Therefore, 
$$N_p^{\rx}{\ry}=\fac{T_p{\rx}}{T_p{\ry}}=\fac{T_p{\rx}}{T_p{\rx}\cap T_pV}$$
There is a canonical isomorphism 
$$\fac{T_p{\rx}}{T_p V\cap T_p{\rx}}\to\fac{(T_p V+T_p{\rx})}{T_p V}$$
Further, due to transversality of $V$ and $\rx$ at $p$, 
$$T_p V+T_p{\rx}=T_p\cx.$$
Thus, we have a canonical isomorphism 
$$N_p^{\rx}\ry\to\fac{T_p{\cx}}{T_pV}=N_p^{\cx} V$$
of the transverse space of $\ry$ in $\rx$ onto the transverse space of $V$ in
$\cx$. The latter is a complex vector space, as a transverse space of a 
complex variety in a complex variety. The isomorphism 
defines a complex structure on the former space $N_p^{\rx}{\ry}$ (which a
priori has no a complex structure, because neither $\rx$, nor $\ry$ have).
\end{proof}

\subsection{Coorientations of a slice variety.}\label{s4.3}
A submanifold with oriented transverse bundle is called {\em cooriented}. 
An orientation of the transverse bundle is called a {\em coorientation} 
of the submanifold.

\begin{corTh}
\label{corTh1} Any slice  is {\em coorientable\/}. The coorientation is
determined by a complex subvariety which carves out the slice.\qed
\end{corTh}
 
\subsection{Behavior of a slice coorientation under the complex 
conjugation.}\label{s4.4}
Coorientation of a slice depends on a complex variety which carves it.
In particular, the coorientation reverses if the codimension of slice is
not divisible by four and the complex variety is replaced by its image
under the complex conjugation. 
 
\begin{Th}\label{Th3} Under the assumptions of Theorem \ref{Th1}, the 
varieties $V$ and $\conj V$ define the same  
orientation of $N^{\rx}\ry$, if $\dim Y\equiv \dim X\pmod4$, and opposite
orientations, if 
$\dim Y\equiv \dim X+2\pmod4$.\end{Th}

\begin{proof} The isomorphisms which bring the complex structures 
from $ N_p^{\cx}V$ and $N_p^{\cx}\conj V$ to $N_p^{\rx}{\ry}$ are included
into the following commutative diagram:

$$
\begin{CD}
N_p^{\rx}{\ry}=\fac{T_p\rx}{T_p\rx\cap T_pV}@>>>\fac{T_p\cx}{T_pV}=N_p^{\cx}V \\
 @| @VV\conj_*V \\
N_p^{\rx}{\ry}=\fac{T_p\rx}{T_p\rx\cap T_p\conj V}@>>>\fac{T_p\cx}{T_p\conj
V}=N_p^{\cx}\conj V
\end{CD}
$$

The complex conjugation induces {\em semilinear\/} isomorphism
$\fac{T_p\cx}{T_pV}\to\fac{T_p\cx}{T_p\conj V}$. 

The dimension of $N_p^{\rx}{\ry}$ over $\C$ is $\frac12(\dim X-\dim Y)$.
Therefore by Theorem \ref{lem1} the orientations of $N_p^{\rx}{\ry}$,
 which are defined by the
complex orientations of $N_p^{\cx}V$ and $N_p^{\cx}\conj V$, differ by
factor $(-1)^{(\dim X-\dim Y)/2}$.
\end{proof}

\section{Coorientations and integer cohomology}\label{s5}

\subsection{A cooriented submanifold as an integer cocycle.}\label{s5.1}
Cooriented submanifolds represent
integer cohomology classes in a very handy way. Let us start with two 
similar statements:

$\bullet$ An {\em oriented\/} closed $k$-dimensional submanifold $Y\subset X$ 
represents a  {\em homology\/} class $\in H_k(X;\Z)$ of dimension $k$
with integer coefficients.

$\bullet$ A {\em cooriented\/} closed submanifold $Y\subset X$ of {\em
codimension\/} $k$ represents a {\em cohomology\/} class $\in
H^k(X;\Z)$ of dimension $k$ with integer coefficients. 

The former is well known. The latter is known, but not that well, and 
deserves to be explicated, see bellow \ref{s5.2}.

If the {\em ambient\/} manifold $X$ is {\em oriented\/}, then {\em
orientations\/} and
{\em coorientations} of a submanifold $Y\subset X$ {\em determine\/} 
each other. Then orientability and coorientability are equivalent. 
The homology and cohomology classes realized by the same submanifold with
the orientation and coorientation determined by each other are
{\em Poincar\'e dual}. 

In a {\em non-orientable\/} manifold, the classes of orientable and 
coorientable  submanifolds are {\em detached\/} from each other. 

Many important real algebraic varieties (e.g., real projective 
spaces of even dimensions) are non-orientable.
Under the assumptions of Theorem \ref{Th1}, the manifolds $\rx$ and $\ry$ 
may be orientable or not. In the simplest example,
when neither of them is orientable, $\rx=\R P^4$ and $\ry=\rpp$.
In this example, $\ry$ is {\em coorientable\/}, as being a {\em slice.}

\subsection{Formal construction of the cohomology class of a cooriented 
closed submanifold.}\label{s5.2}
Let $X$ be a smooth closed manifold of dimension $n$ and $Y$ be its smooth 
closed submanifold of codimension $k$. Let $T$ be a tubular neighborhood of
$Y$ in $X$. 

A coorientation of $Y$ defines orientations on all fibers of  $T$ and 
also the Thom class 
$u\in H^k(T,\p T;\Z)=H^{k}(X,X\sminus\Int T;\Z)$, which
takes value $+1$ on the orientation class of each fiber of $T$. 
The integer cohomology class realized by $Y$ is the image of the Thom class
$u$ 
under the relativization homomorphism
$$H^{k}(X,X\sminus\Int T;\Z)\to H^{k}(X;\Z).$$ 

\subsection{Relative and compact support cohomology classes.}
\label{s5.3}
The construction of Section \ref{s5.2} admits natural extensions and 
variations. First of all, like cochains in the simplicial theory, cooriented
submanifolds can be equipped with coefficients (say, rational numbers). This
turns a cooriented submanifold to a representative of a cohomology class 
with the corresponding coefficient coefficient group.

Further, submaifolds may be non-compact or have boundary and represent
cohomology classes in manifolds with boundary or non-compact manifolds.  

In a smooth compact manifold $X$ {\em with boundary}, an {\em absolute 
cohomology\/} class with integer coefficients is realized by a cooriented 
submanifold 
$Y$ {\em with boundary} $\p Y=Y\cap \p X$. The construction is literally
the same as in Section \ref{s5.2}. 

If a cooriented submanifold $Y$ is {\em closed\/}, then the homomorphism\\
$H^{k}(X,X\sminus\Int T;\Z)\to H^{k}(X;\Z)$ can be replaced by 
$H^{k}(X,X\sminus\Int T;\Z)\to H^{k}(X,\p X;\Z)$. This gives a realization of 
a {\em relative cohomology\/} class. Namely, $Y$ realizes the image of the 
Thom class $u\in H^k(X,\p X;\Z)$ under the inclusion homomorphism
$H^{k}(X,X\sminus\Int T;\Z)\to H^{k}(X,\p X;\Z)$.

If the ambient manifold $X$ is {\em non-compact\/}, then a cooriented
{\em compact submanifold\/} would realize a cohomology class with 
{\em compact support}. 
For realization of a cohomology class with (usual) {\em closed support}, one 
may need to choose a {\em non-compact\/} cooriented submanifold, which is
closed as a subset of $X$. 

\subsection{Functoriality.}\label{s5.4}
Let $M$ and $X$ be smooth manifolds and $f:M\to X$ be a differentiable map.
Let $Y\subset X$ be a cooriented submanifold of codimension $k$ and
$\eta\in H^k(X;\Z)$ be a cohomology class realised by $Y$. 
 
If $f$ is transverse to $Y$, then $N=f^{-1}Y$ is a smooth submanifold of 
$M$, which has the same codimension $k$, and $f$ induces a fiberwise 
isomorphism $N_MN\to N_XY$. Then the cohomology class realised by 
$N$ in $M$ is the image of $\eta$ under $f^*:H^k(X;\Z)\to H^k(M;\Z)$.

\subsection{Intersection number of oriented and cooriented
submanifolds in a non-oriented manifold.}\label{s5.5}
Let $X$ be a ({\em non-oriented}) smooth manifold, let $Y$ be an  
{\em oriented\/} smooth $k$-dimensional
submanifold of $X$ and let $Z$ be a {\em cooriented\/} smooth 
submanifold of $X$ of codimension $k$, {\em transverse\/} to $Y$. Since $\dim
Y=\codim Z$, the intersection $Y\cap Z$ consists of isolated points.
Assume that the intersection is finite (this is automatically true if $Y$
and $Z$ are compact). Define the {\em intersection number\/} $Y\circ Z$ 
as the sum $\sum_{p\in Y\cap Z}Y\circ_pZ$ of all 
{\em local intersection numbers\/} $Y\circ_pZ$ which are defined as follows.  

At $p\in Y\cap Z$ take 
$T_pY\subset T_pX$ and map it to $N_p^XZ=\fac{T_pX}{T_pZ}$ by the
restriction of the natural projection $T_pX\to\fac{T_pX}{T_pZ}$. The tangent
space $T_pY$ is oriented (by the orientation of $Y$), the transverse space 
$N_p^XZ$ is oriented (by the coorientation of $Z$). 
The map is an isomorphism due to transversality of $Y$ and $Z$. 
If this isomorphism maps the orientation of $N_p^XZ$ to the orientation 
of $T_pY$, then $Y\circ_pZ$, the {\em local intersection
number at\/} $p$, is defined to be $+1$, otherwise it is 
defined to be $-1$.

Usually intersection number is defined for transverse {\em oriented\/} 
submanifolds of complementary dimensions in an {\em oriented\/} manifold. 
The definition above
is more general. Indeed, if the ambient manifold is oriented, 
then the orientation
of one of the submanifolds is canonically converted to a 
coorientation and the
definition above gives the usual intersection number. 

If the manifolds $X$, $Y$ and $Z$ are closed, then the intersection number of
$Y$ and $Z$ equals the value of the cohomology class $z\in H^k(X;\Z)$, which
is realized by $Z$, on the homology class $y\in H_k(X;\Z)$, which is 
realized by $Y$.

\subsection{Products.}\label{s5.6}
Let $X$ be a smooth (non-oriented) manifold. 

\noindent
{\bf Cup-product.} If classes $\eta\in H^k(X;\Z)$ and $\Gz\in H^l(X;\Z)$ 
are realized by transverse cooriented closed submanifolds $Y$ and $Z$, then
$\eta\smallsmile\Gz\in H^{k+l}(X;\Z)$ is realized by $Y\cap Z$ with
coorientation obtained via an isomorphism between the transverse bundle
$N^X(Y\cap Z)$ and the direct sum of the restrictions of the transverse 
bundles of $Y$ and $Z$ to $Y\cap Z$. Over any $p\in Y\cap Z$ the 
isomorphism acts as follows:
\begin{multline*}N_p^X(Y\cap Z)=
\fac{T_pX}{T_pY\cap T_pZ}=\fac{(T_pY+T_pZ)}{T_pY\cap T_pZ}\\
=\fac{T_pY}{T_pY\cap T_pZ}\oplus\fac{T_pZ}{T_pY\cap T_pZ}\\
=\fac{(T_pY+T_pZ)}{T_pZ}\oplus\fac{(T_pY+T_pZ)}{T_pY}\\
=\fac{T_pX}{T_pZ}\oplus\fac{T_pX}{T_pY}
=N_p^X(Z)\oplus N_p^X(Y)\end{multline*}
 
\noindent
{\bf Cap-product.} If classes $\eta\in H^n(X;\Z)$ and $\Gz\in H^k(X;\Z)$ 
are realized by by transverse oriented and cooriented closed submanifolds  
$Y$ and $Z$, then $\eta\smallfrown\Gz\in H^{n-k}(X;\Z)$ is realized by 
$Y\cap Z$ oriented from $$0\to T(Y\cap Z)\to TY|_{Y\cap Z}\to
\fac{(TY|_{Y\cap Z})}{T(Y\cap Z)}\to 0,$$ since for any $p\in Y\cap Z$ 
$$\fac{T_pY}{T_pY\cap
T_pZ}=\fac{(T_pY+T_pZ)}{T_pZ}=\fac{T_pX}{T_pZ}=N_p^XZ.$$
Evaluation of an integer cohomology class on an
integer homology class of the same dimension, considered above in Section
\ref{s5.5}, is a special case of the cap-product.

\subsection{Would be good in a textbook.}\label{s5.7}
Representation of an integer cohomology class by cooriented submanifolds 
can hardly be found in Algebraic Topology textbooks. Though, it is mentioned
in Wikipedia's article on cohomology. 

Modern expositions of homology theory tend to expel 
geometric objects, like cycles and cocycles, from the major 
theorems and hide them under the hood of technical definitions and proofs.

I believe that the material of this section would be highly appropriate 
in a textbook. 
It shows a 
geometric part of the story, demonstrates that cohomology classes are 
not less geometric than homology classes and disclose a nice geometry 
of operations with homology and cohomology.

As tools for illustrating homology theory,
oriented and cooriented submanifolds may compete with integer cycles and
cocycles. Submanifolds provide more transparent illustration
(thanks to usage of transversality and pullback construction). 
However, simplicial cycles and cocycles have two well-known 
advantages. 

{\em Firstly}, the class of simplicial spaces seems to be broader 
than the class of smooth manifolds. 

{\em Secondly}, there exist homology and cohomology classes of a 
smooth manifold, 
which cannot be realized by smooth oriented and cooriented submanifolds.
This was discovered by Thom \cite{Th}. 

The problem solved by Thom was posed as realizability of {\em
homology\/} integer classes by oriented submanifolds. However, 
technically Thom considered realization of a {\em cohomology\/} 
class Poincar\'e dual to the homology one. In particular, he 
considered realization of integer homology classes only in {\em
orientable\/} manifolds. 

As long as we speak about slices, both issues are {\em irrelevant\/}: we study
non-singular real algebraic varieties, which are smooth manifolds, slices 
are smooth cooriented submanifolds. 

From the viewpoint of homotopy theory the class of compact
finite-dimensional simplicial spaces is not really broader than the class
of compact smooth manifolds. A compact 
finite-dimensional simplicial space can be embedded to a smoothly 
triangulated $\R^n$ in such a way that its 
regular neighborhood would be a smooth manifold homotopy 
equivalent to the original space. 

Furthermore, many homology and cohomology classes with integer 
coefficients are realizable. 
In particular, all integer cohomology classes in a closed smooth manifold 
of dimension $\le 9$ are realizable. All integer cohomology classes of 
dimensions 1 and 2 are realizable. For any integer cohomology class $u$,
some its non-zero multiple $Nu$ is realizable.  

The realizability problem disappears if 
closed smooth submanifolds are replaced by closed 
submanifolds with singularities of codimension $\ge2$ (i.e., by a passage
from submanifolds to pseudomanifolds), or by chains made of 
smooth compact submanifolds with boundaries and corners. 

However, writing a textbook is not the purpose of this paper. For what
follows, we restrict ourselves on one application to topology of real 
algebraic varieties. It is based on the classical construction of linking
coefficients of a cycle and cocyle, see, e.g. Lefschetz \cite{Lef},
interpreted in terms of oriented and cooriented submanifolds. In the next
section a generalization of linking coefficients is sketched.  
We skip routine verifications. 

\subsection{Cap-linking of oriented and cooriented
submanifolds.}\label{s5.8} 
Let $X$ be a smooth closed manifold. Let $A$ be its
smooth closed {\bf oriented} submanifold of dimension $n$ and $B$ be a smooth
closed {\bf cooriented} submanifold of $X$ of codimension $k\le n+1$.
 Assume that $A\cap B=\empt$. 

Assume that the homology class $\Ga\in H_n(X)$ realized by $A$ and the
integer cohomology class $\Gb\in H^k(X;\Z)$  realized by $B$ are of finite 
orders: $p\Ga=0$ and $q\Gb=0$ for some integer $p$ and $q$. 

Under these assumptions, $A$ and $B$ define a
homology class $\lk_\frown(A,B)\in H_{n+1-k}(X;\Q)$. We will call this
class {\em cap-linking} of $A$ and $B$. It depends not 
only of $\Ga$ and $\Gb$, but of $A$ and $B$, or, rather, on the isotopy class
of $A\cup B\subset X$. When $k=n+1$ and $X$ is oriented, then
$\lk_{\frown}(A,B)\in H_0(X;\Q)$ is the linking number of $A$ and $B$
equipped with orientation dual to the initial coorientation of $B$.

The cap-linking  of $\lk_{\frown}(A,B)$ can be obtained by two constructions, which give the same result. 

In one of them, we choose a smooth compact {\bf oriented} submanifold  $A'$ 
transverse to $B$ with $\p A'=pA$ and take $\frac1pA'\frown B$.

In the other one, we choose a smooth compact {\bf cooriented} submanifold  
$B'$ transverse to $A$ with $\p B'=qB$ and take $A\frown \frac1q B'$.

Although $A'$ and $B'$ have non-empty boundary, 
both intersections, $A'\cap B$ and $A\cap B'$, have empty
boundary and are oriented closed $n+1-k$ dimensional submanifolds. 
Indeed, since $A\cap B=\empt$, \vspace{-7pt}
\begin{itemize}\setlength\itemsep{-4pt}
\setlength\topsep{-4pt}
\setlength\parsep{0pt}
\item $\p (A'\cap B)=\p A'\cap B=pA\cap B=p(A\cap B)=\empt$ and 
\item $\p(A\cap B')=A\cap\p B'=A\cap qB=q(A\cap B)=\empt$. 
\end{itemize}\vspace{-7pt}
As rational multiples of intersections of oriented closed submanifolds and 
cooriented closed submanifolds,  $\frac1pA'\cap B$ and $A\cap \frac1qB'$ 
are rational multiples of oriented closed submanifolds which define 
elements of $H_{n+1-k}(X;\Q)$. 

In fact, this is the same element.  
In order to prove that, make $A'$ and $B'$ 
transverse to each other. It can be done without changing them near their
boundaries $\p A'=\frac1pA$ and $\p B'=\frac1q B$ since $A\cap B=\empt$. 

Consider the intersection $A'\cap B'$. This is a smooth manifold. By
Leibniz rule, 
$$\p(A'\cap B')=(\p A'\cap B')\cup(A'\cap \p B')=(p A\cap B')\cup(A'\cap q
B)$$
Hence,
$$\p\tfrac1{pq}(A'\frown B')=\tfrac1{pq}\left((p A\frown B')\cup(A'\frown q
B)\right)=(A\frown \tfrac1qB')\cup (\tfrac1pA'\frown B). $$

 In fact, $A\frown \tfrac1qB'$ and $\tfrac1pA'\frown B$
are involved in $\p\tfrac1{pq}(A'\frown B')$ with opposite orientations, 
cf. the formula for the boundary of cap-product for $y\in C_n$ and $z\in
C^k$
$$\p(y\frown z)=(-1)^k((\p y)\frown z-y\frown \p z) $$
in the usual simplicial complexes.

\subsection{Linking number of a point and a curve on $\R P^2$ }\label{s5.9}
Consider the simplest example of the linking number. The ambient
space is the real projective plane $\R P^2$. Cf. \cite{Vir}, where the
linking number was used in the statement of Rokhlin's complex orientation
formula. There it appeared under the name of {\em index of a point with
respect to a curve}. 

On the projective plane, a single oriented point does not bound, because
$H_0(\rpp)=\Z$.

A double co-oriented point
bounds a projective line passing through it, with a co-orientation, 
which flips at the point.

\centerline{\includegraphics{./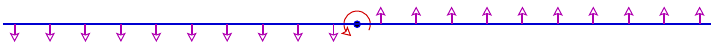}}

This allows to consider the linking number of a double co-oriented 
point with any oriented closed curve.

\centerline{\begin{picture}(0,0)%
\includegraphics{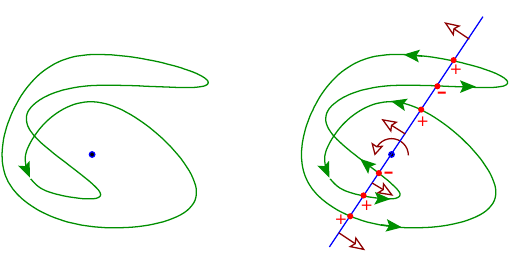}%
\end{picture}%
\setlength{\unitlength}{5387sp}%
\begin{picture}(3012,1532)(-1390,-332)
\put(939,229){\makebox(0,0)[lb]{\smash{\fontsize{9}{10.8}\normalfont {\color[rgb]{0,0,1}$A$}%
}}}
\put(-816,229){\makebox(0,0)[lb]{\smash{\fontsize{9}{10.8}\normalfont {\color[rgb]{0,0,1}$A$}%
}}}
\put(1496,-94){\makebox(0,0)[lb]{\smash{\fontsize{9}{10.8}\normalfont {\color[rgb]{0,0,1}$C$}%
}}}
\put(-269,-106){\makebox(0,0)[lb]{\smash{\fontsize{9}{10.8}\normalfont {\color[rgb]{0,0,1}$C$}%
}}}
\end{picture}%
}

$\lk_{\frown}(C,2A)=2$, $\lk_{\frown}(C,A)=1$. By a similar calculation one
can show 
that $\lk_{\frown}(\R P^1,A)=\frac12$.

\section{Linking of real algebraic varieties}\label{s6}
In this section we come back to real algebraic geometry. 

\subsection{Linking of dividing curves and codimension 2 slice varieties.}\label{s6.1}
Let $n$ be an integer $>1$, let $A$ be a non-singular real algebraic projective curve of type I in 
$\rpn$.  Let $\ca$ realize $d\,[\cpl]\in H_2(\cpn)$.

Let $B$ be a slice real algebraic variety of dimension
$n-2$ of the same space $\rpn$.  Let $V\subset\cpn$ be a non-singular 
complex hypersurface transverse to $\rpn$ with $\rb=V\cap \rpn$, let
$[V]=D\,[\C
P^{n-1}]\in H_{2n-2}(\cpn)$ and $\ra\cap\rb=\empt$.

As a curve of type I, $\ra$ divides $\ca$ into two halves. Let $H$ be one
of the halves. Orient $\ra$ as $\p H$. 

As a slice variety, $\rb$ is cooriented. Since $\ra\cap\rb=\empt$ and 
$H_1(\rpn)=\fac\Z2$, there is a well defined cap-linking number 
$\lk_\frown(\ra,\rb)\in\frac12\Z$. 

\begin{Th}\label{Th5} Under the assumption above, $\frac12Dd=H\circ
V+\lk_\frown(\ra,\rb)$. 
\end{Th}\medskip

\begin{proof}
Since $H_1(\rpn)=\fac\Z2$, there exists a smooth oriented compact surface 
$A'\subset \rpn$ with $\p A'\subset\ra$ and 
$$[\p A']=
2[\ra]\in H_1(\ra) $$
Assume that $A'$ is transverse to $\rb$. Then
$\lk_{\frown}(\ra,\rb)=\frac12A'\frown\rb$, where $A'\frown\rb$ is defined in
Section \ref{s5.6}, which is the same as the
intersection number of $A'$ and $\rb$ in Section \ref{s5.5}.

\begin{lem}\label{lem2} 
Form an integer cycle $R=H\cup -\frac12A'$. The homology class $[R]$ realized by $R$ is 
$\frac12d\,[\C P^1]$. 
\end{lem}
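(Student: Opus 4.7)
The plan is to first verify that $R$ is a cycle and then identify its class in $H_2(\C P^n;\Q)\cong\Q$ by exploiting complex-conjugation symmetry. Because $A$ has type I, $\C A\smallsetminus \R A$ splits into two connected components $H$ and $H'$, each carrying the restriction of the complex orientation of $\C A$. With $\R A$ oriented as $\p H$, we then have $\p H'=-\R A$ and $\C A=H+H'$ as oriented $2$-chains. After a small modification of $A'$ in a collar of its boundary (which does not change its relative homology class), we may assume $\p A'=2\,\R A$ on the nose, so that $\p R=\R A-\tfrac12(2\,\R A)=0$ and $R$ is a (rational) $2$-cycle. It suffices to show that the integer cycle $2R=2H-A'$ represents $d\,[\C P^1]\in H_2(\C P^n;\Z)$.

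The key algebraic step is the identity
$$2H\;=\;(H+H')+(H-H')\;=\;\C A+(H-H'),$$
which turns the desired claim into the statement that the integer $2$-cycle $H-H'-A'$ is null-homologous in $\C P^n$. For this I would use the conjugation involution $\conj\colon\C P^n\to\C P^n$. Its restriction to $\R P^n$ is the identity, so $\conj_*A'=A'$ as an oriented chain. On $\C A$, the map $\conj$ is antiholomorphic, hence orientation-reversing, and it swaps $H$ with $H'$ as sets; consequently $\conj_*H=-H'$ and $\conj_*H'=-H$. Combining,
$$\conj_*(H-H'-A')=(-H')-(-H)-A'=H-H'-A',$$
so the class $[H-H'-A']\in H_2(\C P^n;\Z)$ lies in the $(+1)$-eigenspace of $\conj_*$. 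But $\conj_*$ acts as $-1$ on $H_2(\C P^n;\Z)=\Z\langle[\C P^1]\rangle$ (a real projective line $\R P^1\subset\C P^1$ is carried to itself by $\conj$, which reverses its complex orientation), and so the only fixed class is $0$. Hence $[H-H'-A']=0$, giving $[2R]=[\C A]=d\,[\C P^1]$ and therefore $[R]=\tfrac12 d\,[\C P^1]$.

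The main obstacle is the orientation bookkeeping for $\conj$. One must combine the facts that $\conj$ swaps $H$ and $H'$ \emph{as sets} and that an antiholomorphic diffeomorphism of a complex curve is orientation \emph{reversing}, in order to extract the identity $\conj_*H=-H'$ with the correct sign; and one must also check that $\conj$ preserves the orientation of $A'$ (which it does, because it fixes $A'\subset\R P^n$ pointwise). Once these two sign checks are secured, the conjugation-eigenspace argument delivers the homology class of $R$ immediately.
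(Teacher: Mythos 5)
Your proof is correct and is essentially the paper's own argument: both rest on exactly the two facts that $\conj_*$ acts as $-1$ on $H_2(\C P^n;\Z)$ (since $\conj$ reverses the complex orientation of $\C P^1$) and that $\conj_*H=-H'$ while $\conj_*A'=A'$. The paper organizes the computation as $2[R]=[R]-\conj_*[R]=[H-\conj H]=[\C A]=d\,[\C P^1]$, whereas you equivalently show $[R]+\conj_*[R]=[H-H'-A']=0$ and then $2[R]=[\C A]$; this is the same argument rearranged, with your version making the cycle condition and the sign bookkeeping for $\conj$ more explicit.
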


\begin{proof} The following arguments are borrowed from Rokhlin's proof of 
the complex orientation formula, see the end of section 2 in \cite{Rokh}. 
Observe that $\conj[\C P^1]=-[\C P^1]$, cf. Theorem \ref{lem1}.
The group $H_2(\cpn)$ is generated by $[\C P^1]$.
Therefore  $\conj_*:H_2(\cpn)\to H_2(\cpn)$ acts as multiplication by 
$-1$. Hence $\conj_*[R]=-[R]$ and $[R]-\conj[R]=2[R]$. 

The cycle $\conj R$ is $\conj H-\frac12\conj A'=\conj H-\frac12A'$.
Therefore  $[R]-\conj_*[R]=[H-\conj H]=[\ca]=d\,[\C P^1]$.
Thus $[R]=\frac12d\,[\C P^1]$.
\end{proof} 

The rest of the proof of Theorem \ref{Th5} is based on two evaluations
of the intersection number $R\circ V$. On one hand, since $R$ realizes 
$\frac12d\,[\C P^1]$ and $V$ realizes $D[\C P^{n-1}]$, their intersection
number equals $\frac12dD$.

On the other hand, 
$$R\circ V=(H\cup -\frac12 A')\circ V=H\circ V-\frac12 A'\circ V$$
Here the $\circ$ denotes the intersection number of transverse oriented 
submanifolds of complementary dimensions in $\cpn$.
\end{proof}

\subsection{Purely real Corollary of Theorem \ref{Th5}.}\label{s6.2} 
$|\lk_{\frown}(\ra,\rb)|\le \frac12 Dd$.\medskip

\noindent
{\bf Derivation of Corollary from Theorem \ref{Th5}.} Obviously, 
$$0\le H\circ V\le\ca\circ V=Dd.$$ 
By Theorem \ref{Th5}, $H\circ V=\frac12 Dd-\lk_{\frown}(\ra,\rb)$. 
Hence, $$0\le\frac12 Dd-\lk_{\frown}(\ra,\rb)\le Dd .$$
By subtracting $\frac12 Dd$, we get
$$-\frac12Dd\le-\lk_{\frown}(\ra,\rb)\le \frac12Dd ,$$
 i.e., $|\lk_\frown(\ra,\rb)|\le \frac12 Dd$.\qed \medskip

\subsection{On the projective plane.}\label{s6.3}
On $\rpp$, a slice variety $B$ of codimension two is 0-dimensional. 
It is a base for a real pencil $\{C_t\}_{t\in\R P^0}$ of real curves.

How to perceive a coorientation of $B_\R$? The real curves $C_t$, which 
belong to the pencil, pass through the points of $B_\R$. While moving 
in the pencil, $C_t$ rotates around the base points. The 
coorientation at a point of $B_\R$ is the direction of rotation. 

Along a curve which belongs to the pencil, the coorientations alternate. On
the whole projective plane, it is impossible to compare coorientations of 
different points, because the projective plane is non-orientable. 

A choice of an oriented real line $A_\R$ disjoint from $B_\R$ turns $rpp$
to $\R^2$ and makes a comparison of coorientations possible. Moreover, 
a choice of orientation of $A_\R$ determines the sign of a coorientation
at each point of $B_\R$. The total sum of the signs over the whole $B_\R$
is $\lk_{\frown}(\ra,\rb)$. Corollary of Theorem \ref{Th5} gives a strong
upper bound for the difference between the numbers of points in $B_\R$ with
different coorientations.

\end{document}